\documentclass[10pt, a4paper]{article}
\usepackage[english]{babel}
\usepackage{amsmath}
\usepackage{cmll}
\usepackage{amsthm}
\usepackage{amssymb}
\usepackage{bbm}
\usepackage{mathrsfs}
\usepackage{stmaryrd}
\usepackage[all]{xy}
\usepackage{bussproofs}

\newcommand{\sse}{\leftrightarrow}
\newcommand{\arr}{\longrightarrow}

\newcommand{\imply}{\to}
\newcommand{\C}{\mathbb{C}}

\newcommand{\pp}{\textbf{ISL}}
\newcommand{\ed}{elementary existential}

\newdir{|>}{%
!/4.5pt/@{|}*:(1,-.2)@^{>}*:(1,+.2)@_{>}}

\mathcode`\:="603A     
\mathchardef\colon="303A  

\mathcode`\<="4268     
\mathcode`\>="5269     
\mathchardef\gt="313E  
\mathchardef\lt="313C  

\theoremstyle{definition}
\newtheorem{deff}{Definition}[section]
\newtheorem{prop}[deff]{Proposition}
\newtheorem{exa}[deff]{Example}
\newtheorem{lem}[deff]{Lemma}
\newtheorem{cor}[deff]{Corollary}
\newtheorem{Remark}[deff]{Remark}

\begin{document}
\title{A sheafification theorem for doctrines}
\author{Fabio Pasquali\footnote{Part of the research in this paper was carried out while the author worked at Utrecht University in the NWO-Project `The Model Theory of Constructive Proofs' nr.  613.001.007.}}
\maketitle

\begin{abstract} We define the notion of sheaf in the context of doctrines. We prove the associate sheaf functor theorem. We show that grothendieck toposes and toposes obtained by the tripos to topos construction are instances of categories of sheaves for a suitable doctrine.  
\end{abstract}
\section*{Introduction}\label{sec0}
In this paper we study the notions of sheaf in the context of doctrines. A doctrine, whose definition was introduced by Lawvere \cite{LawAdj, LawDiag,LawEq}, can be informally though of as a category $\C$ with a chosen internal logic $P$. In this context we define an object of $\C$ to be a sheaf if it orthogonal to a class of morphisms of $\C$, which satisfy appropriate conditions formulated in the logic of $P$. We shall refer at those objects as $P$-sheaves and by we denote by \textbf{Shv}$(\C,P)$ the full subcategory of $\C$ on $P$-sheaves.\\\\
For a suitable class of doctrines we give a proof of the associate sheaf functor theorem, i.e. of the fact that \textbf{Shv}$(\C,P)$ is a reflective subcategory of $\C$. The proof is entirely internal in the logic of the doctrine and it is a generalization of the one given in \cite{veit} and sketched in \cite{Jacobs}, which is a partially internal proof, carried out in the internal logic of an arbitrary elementary topos.\\\\
As an application, we show that grothendieck toposes and toposes obtained by the tripos to topos construction (such as the effective topos \cite{tripinret}) are categories of the form \textbf{Shv}$(\C,P)$ for a suitable doctrine.\\\\
Sections \ref{sec1} and \ref{sec7} introduces doctrines and sheaves respectively. In section \ref{sec17} we prove the associate sheaf functor theorem for an appropriate class of doctrine doctrines. In section \ref{sec1a} we give the definition of complete objects, which turns out to be useful in discussing some application in section \ref{sec7b}.\\\\
\textbf{Acknowledgments}. The author is grateful to Giuseppe Rosolini, Jaap van Oosten and Ruggero Pagnan for their indispensable remarks.
\section{Preliminary definitions on doctrines}\label{sec1}
In this section we recall some definitions concerning doctrines. Our notation closely follows the one in \cite{RM2}.\\\\Let \textbf{ISL} be the category of inf-semilattices and homomorphisms between them.
\begin{deff}\label{def1}
A doctrine is a pair $(\C, P)$ where $\C$ is a category with finite products and $P$ a functor$$P:\C^{op}\arr \pp$$
\end{deff}
We shall refer to $\C$ as the base category of the doctrine. We will write $f^*$ instead of $P(f)$ to denote the action of the functor $P$ on the morphism $f$ of $\C$, we will often call it reindexing along $f$. Binary meets in inf-semilattices are denoted by $\wedge$. Elements in $P(A)$ will often be called formulas over $A$ and the top element is denoted by $\top_A$.
\begin{exa}
If $\C$ is a category with finite limits, we shall denote by $(\C,\text{sub})$ the doctrine of subobjects of $\C$. The functor sub maps every object $A$ of $\C$ to sub$(A)$, the collection of subobjects over $A$. Top elements are represented by identities, while binary meets and reindexing are provided by pullbacks.   
\end{exa}
\begin{deff}\label{def2}
A doctrine $(\C,P)$ is \textbf{elementary} \textbf{existential} if for each arrow $f : X \arr Y$ in $\C$ there exists a functor $\exists_{f} \dashv f^*$ satisfying
\begin{itemize}
\item[-]Beck-Chevalley condition: i.e. for every pullback of the form 
\[\xymatrix{
X\ar[d]_-{g}\ar[r]^-{f}&Y\ar[d]^{h}\\
Z\ar[r]_-{k}&W
}\]
it holds that $\exists_f\circ g^*= h^*\circ\exists_k$
\item[-]Frobenius Reciprocity: i.e. $\exists_f (\alpha\wedge f^*\beta) = \exists_f \alpha \wedge \beta$.
\end{itemize}
\end{deff}
\begin{Remark}\label{ginapina}For every object $A$ in $\C$ the formula $\exists_{\Delta_A}\top_A$ is in $P(A\times A)$. We will abbreviate it by $\delta_A$ and we shall refer at it as the \textbf{equality predicate} over $A$. The formula $\delta_A$ is \textbf{substitutive}, i.e. for every $X$ in $\C$ and every $\phi$ in $P(X\times A)$ it holds that
$$<\pi_1,\pi_2>^*\phi\wedge<\pi_2,\pi_3>^*\delta_A\le<\pi_1,\pi_3>^*\phi$$
where $\pi_1$, $\pi_2$ and $\pi_3$ are projections from $X\times A\times A$. Moreover for a morphism $f:A\arr B$ in $\C$ and $\alpha$ a formula in $P(A)$ we have $$\exists_f\alpha = \exists_{\pi_1}((id_B\times f)^*\delta_B\wedge\pi_2^*\phi)$$
where $\pi_1$ and $\pi_2$ are projections from $B\times A$ \cite{LawEq}.\end{Remark}
Given a doctrine $(\C,P)$, an object $A$ of $\C$ and a formula $\alpha$ in $P(A)$, we say that the doctrine has a \textbf{comprehension} of $\alpha$ if there exists a morphism $\lfloor\alpha\rfloor:X\arr A$ such that $\top_X \le \lfloor\alpha\rfloor^*\alpha$ and for every $f:Y\arr A$ such that $\top_Y\le f^*\alpha$ there exists a unique $h:Y\arr X$ with $\lfloor\alpha\rfloor \circ h = f$.
\begin{deff}
In a \ed\ doctrine $(\C,P)$ a morphism $f$ is said to have an \textbf{image} if there exists a comprehension of $\exists_f\top_X$.
\end{deff}
\begin{deff}\label{nonnonanni}
A doctrine $(\C,P)$ has \textbf{power objects} if for every $X$ in $\C$, there exists $\mathbb{P}(X)$ in $\C$ and a formula $\in_X$ in $P(X\times \mathbb{P}(X))$ such that for every object $Y$ in $\C$ and formula $\gamma$ in $P(X\times Y)$ there exists a unique morphism $\{\gamma\}:Y\arr \mathbb{P}(X)$ such that $\gamma = (id_X\times \{\gamma\})^*\in_X$.
\end{deff}
\begin{Remark}\label{walter}If a doctrine $(\C,P)$ has power objects, then for every morphism $f:A\arr\mathbb{P}(B)$ we have that $f = \{(id_B\times f)^*\in_B\}$.
\end{Remark}
\begin{exa}
The subjects doctrine of an elementary topos $\mathcal{E}$ is an \ed\ doctrine with power objects \cite{Jacobs}. Also localic triposes, i.e. triposes of the form $(\text{Sets},\mathbb{H}^{(-)})$, for $\mathbb{H}$ a locale, are \ed\ doctrines with power objects \cite{Tripinret}. But the realizability triposes are not, since arrows of the form $\{\gamma\}$ need not to be unique \cite{Jacobs}.
\end{exa}
Some of the definitions and some the proofs that follows in the next sections are more readable if written in the \textbf{internal language} and we introduce it recalling the definition of Pitts in \cite{Tripinret}:
let $(\C,P)$ be an \ed\ doctrine; let $\Sigma_P$ be the signature which has a  sort for each object of $\C$, an $n$-ary function symbols for each morphism in $\C$ of the form $X_1\times X_2 \times . . . X_n\arr X$ and an $n$-ary relation symbols for each element of $P(X_1\times X_2 \times . . . X_n)$; the internal language of $(\C,P)$ is made by those terms and formulas over $\Sigma_P$. Thus, for an \ed\ doctrine, formulas in the internal language are written in the regular fragment of logic (i.e $\wedge$, $\top$, $=$ and $\exists$).

\section{Complete objects}\label{sec1a}
In an \ed\ doctrine $(\C,P)$ a formula $F$ in $P(Y\times A)$ is a functional relation from $Y$ to $A$ if $$F(y,a) \wedge F(y,a') \le \delta_A(a,a')$$ $$\top_Y(y) = \exists a:Y.\ F(y,a)$$
We shall denote by \textbf{Map}$(\C,P)$ the category whose objects are those of $\C$ and whose morphisms are functional relations of $(\C,P)$: identities are internal equalities and the composition is the usual composition of relations, i.e. if $F$ is in $P(A\times B)$ and $G$ in $P(B\times C)$, the composition $G\circ F$ is the formula of $P(A\times C)$ determined by $$\exists b:B.\ (F (a,b)\wedge G(b,c))$$
There exists a functor $\Gamma:\C\arr \textbf{Map}(\C,P)$ which is the identity on objects an maps a morphism $f:Y\arr A$ to the formula $\Gamma f = (f\times id_A)^*\delta_A$ in $P(Y\times A)$ which is a functional relation from $Y$ to $A$. $\Gamma f$ is said the \textbf{internal graph} of $f$. 
\begin{deff}An object $A$ of $\C$ is said to be \textbf{complete} if for every $Y$ in $\C$ and for every functional relation $F$ from $Y$ to $A$ there exists a unique morphism $f:Y\arr A$ such that $$\Gamma f = F$$
\end{deff}
We shall denote by \textbf{Map}$_c(\C,P)$ the full subcategory of \textbf{Map}$(\C,P)$ on complete objects. There exists a functor $$\nabla:\textbf{Map}_c(\C,P)\arr\C$$ which is the identity on objects and maps every functional relation $F$ to the unique morphism of $\C$ whose internal graph is $F$, which fits in the following commutative diagram
\[
\xymatrix{
\textbf{Map}_c(\C,P)\ar@{^{(}->}[r]\ar[dr]_-{\nabla}&\textbf{Map}(\C,P)\\
&\C\ar[u]_-{\Gamma}
}
\]
\begin{Remark}\label{carcomplete}
(Rosolini \cite{ccR}) Given an \ed\ doctrine $(\C,P)$, we say that a formula $L$ in $P(Y\times A)$ is a left adjoint if there exists a formula $R$ in $P(A\times Y)$ such that $$\delta_Y(y,y') \le \exists a:A.\ (L(y,a) \wedge R(a,y'))$$ $$\exists y:Y.\ (R(a,y) \wedge L(y,a'))\le \delta_A(a,a')$$
Thus an object $A$ is complete if and only if for every $Y$ and every left adjoint formula $L$ in $P(Y\times A)$ there exists a unique $f:Y\arr A$ such that $\Gamma f = L$.\\ In fact, given a compete object $A$ and formulas $L$ and $R$ as above $L(y,a) \wedge R(a,y)$ is functional from $Y$ to $A$, then, since $A$ is complete, there exists a unique morphism $f:Y\arr A$ such that$$\delta_A(f(y),a) = L(y,a) \wedge R(a,y)$$ from which $\delta_A(f(y),a) \le L(y,a)$. Moreover $\top_{Y\times Y} = R(f(y),y)$, hence $$L(y,a) \le R(f(y),y) \wedge L(y,a) \le \exists z:Y. R(f(y),z) \wedge L(z,a) \le \delta_A(f(y),a)$$ and therefore $L(y,a) = \delta_B(f(y),a)$. Conversely, every functional relation $F$ is left adjoint to $F^{op}=<\pi_2,\pi_1>^*F$. 
\end{Remark}

\section{Sheaves in doctrines}\label{sec7}
\begin{deff}\label{dense}
In an \ed\ doctrine $(\C,P)$ a morphism $f:A\arr B$ of $\C$ is said to be \textbf{internally bijective} whenever
$$\delta_A=(f\times f)^*\delta_B$$ $$\top_B = \exists_f\top_A$$
\end{deff}
In the internal language the first condition is $$\delta_A(a,a')=\delta_B(f(a),f(a'))$$ and when it holds we shall say that $f$ is \textbf{internally injective}. On the other hand the second condition is $$\top_B(b) = \exists a:A.\ \delta_B(f(a),b)$$ and when it holds we shall say that $f$ is \textbf{internally surjective}.
\begin{exa}Given an elementary topos $\mathcal{E}$ and its doctrine of subobjects $(\mathcal{E},\text{sub})$, internally injective morphisms are the monomorphisms and internally surjective morphisms are the epimorphisms. Therefore the class of internally bijective arrows is the class of isomorphisms \cite{TT}.\end{exa} 
\begin{deff}\label{sheaf}
In an \ed\ doctrine $(\C,P)$ an object $A$ of $\C$ is a $P$-\textbf{sheaf} if for every span in $\C$ of the form
\[\xymatrix{Y&X\ar[l]_-{d}\ar[r]^-{q}&A}\]
with $d$ internally bijective, there exists a unique $h:Y\arr A$ with $h\circ d = q$.
\end{deff}
In other words, $A$ is a $P$-sheaf if it is orthogonal to the class of internally bijective morphisms of $(\C,P)$. We shall denote by \textbf{Shv}$(\C,P)$ the full subcategory of $\C$ on $P$-sheaves.
\begin{prop}\label{ccu}
In every \ed\ doctrine $(\C,P)$ if an object is complete, then it is a $P$-sheaf.
\end{prop}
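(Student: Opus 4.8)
The goal is to show that a complete object $A$ is orthogonal to every internally bijective morphism $d:X\arr Y$. So suppose we are given a span $Y\xleftarrow{d} X\xrightarrow{q} A$ with $d$ internally bijective. The natural strategy is to cook up a functional relation $F$ from $Y$ to $A$ out of $d$ and $q$, invoke completeness to get a morphism $h:Y\arr A$ with $\Gamma h = F$, and then check that $h$ is the unique fill-in satisfying $h\circ d = q$.

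First I would write down the candidate relation. The internal reading of ``$y$ is related to $a$'' should be: there exists $x:X$ with $d(x)=y$ and $q(x)=a$; concretely $F := \exists_{\langle d\circ\pi_1,\pi_2\rangle}(\Gamma q)$ in $P(Y\times A)$, or equivalently the composite of relations $\Gamma q \circ (\Gamma d)^{op}$, which in the internal language is $\exists x:X.\ (\delta_Y(d(x),y)\wedge \delta_A(q(x),a))$. Then I would verify the two functionality axioms. Single-valuedness $F(y,a)\wedge F(y,a')\le \delta_A(a,a')$ uses internal injectivity of $d$: from witnesses $x,x'$ with $d(x)=y=d(x')$ we get $\delta_X(x,x')$ via $\delta_X=(d\times d)^*\delta_Y$, and then substitutivity of $\delta_A$ along the equal points transports $\delta_A(q(x),a)$ and $\delta_A(q(x'),a')$ to $\delta_A(a,a')$. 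Totality $\top_Y(y)=\exists a:A.\ F(y,a)$ uses internal surjectivity of $d$, which gives $\top_Y(y)=\exists x:X.\ \delta_Y(d(x),y)$, and then one takes $a:=q(x)$, using reflexivity $\top_X\le \Delta_X^*\delta_X$ hence $\top_X\le \Gamma q$ read diagonally — these are the standard manipulations with $\exists$, Frobenius and Beck--Chevalley.

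Having a functional relation $F$, completeness of $A$ yields a unique $h:Y\arr A$ with $\Gamma h=F$. I would then check $h\circ d=q$. Using completeness again, it suffices to show $\Gamma(h\circ d)=\Gamma q$, and $\Gamma(h\circ d)$ is computed by reindexing $\Gamma h=F$ along $d\times id_A$, i.e. it is $F(d(x),a)=\exists x':X.\ (\delta_Y(d(x'),d(x))\wedge\delta_A(q(x'),a))$; internal injectivity of $d$ collapses $\delta_Y(d(x'),d(x))$ to $\delta_X(x',x)$, and substitutivity then turns the whole thing into $\delta_A(q(x),a)=\Gamma q$. For uniqueness of the fill-in: if $h'\circ d=q$ as well, then $\Gamma h'$ and $\Gamma h$ are both functional relations from $Y$ to $A$ whose reindexing along $d\times id_A$ agree (both equal $\Gamma q$); since $d$ is internally surjective one shows this forces $\Gamma h'=\Gamma h$, and completeness (the uniqueness clause) gives $h'=h$. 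Alternatively, uniqueness of the fill-in is immediate once one observes that an internally surjective morphism is an epimorphism in $\textbf{Map}(\C,P)$, hence $\Gamma d$ is epi, hence $\Gamma h = \Gamma h'$.

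The main obstacle is purely bookkeeping: checking that $F$ is functional and that the reindexed graph equals $\Gamma q$ requires careful use of substitutivity of the equality predicate together with the Frobenius and Beck--Chevalley equations to push existentials around. There is no conceptual difficulty — the two defining conditions of ``internally bijective'' are exactly calibrated to give the two defining conditions of ``functional relation'' for $F$ — but the calculation is most cleanly done in the internal language, where it reduces to elementary regular-logic reasoning about the formula $\exists x:X.\ (d(x)=y\wedge q(x)=a)$.
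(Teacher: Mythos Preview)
Your proposal is correct and follows essentially the same route as the paper: define $F(y,a)=\exists x:X.\ \delta_Y(d(x),y)\wedge\delta_A(q(x),a)$, use internal injectivity and surjectivity of $d$ to verify functionality, invoke completeness to obtain $h$ with $\Gamma h=F$, and then use internal injectivity of $d$ again to collapse $F(d(x),a)$ to $\delta_A(q(x),a)$ and conclude $h\circ d=q$ by completeness. Your treatment is in fact slightly more thorough than the paper's, since you explicitly argue uniqueness of the fill-in (via internal surjectivity of $d$), which the paper leaves implicit.
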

\begin{proof}
Suppose $A$ is complete and take the following span in $\C$ 
\[
\xymatrix{Y&X\ar[l]_-{d}\ar[r]^-{q}&A}
\]
with $d$ internally bijective and define $F$ in $P(Y\times A)$ as $$F(y,a) = \exists x:X.\ \delta_A(q(x),a) \wedge \delta_Y(d(x),y)$$
by Frobenius Reciprocity we have that $$F(y,a)\wedge F(y,a') \le \exists x\ x':X.\  \delta_A(q(x),a)\wedge \delta_Y(d(x),d(x'))\wedge\delta_A(q(x'),a')$$  which, by internal injectivity of $d$ and repetitive use of substitutivity of $\delta$, brings to $F(y,a)\wedge F(y,a')\le \delta_A(a,a')$. Moreover
\begin{equation}\notag
\begin{split}
\exists a:A.\ F(y,a) & = \exists x:X.\ \exists a:A.\ \delta_A(q(x),a) \wedge \delta_Y(d(x),y) \\
& = \exists x:X.\ \delta_Y(d(x),y) = \top_Y
\end{split}
\end{equation}
again using Frobenius reciprocity and internal surjectivity of $d$.\\Then, since $A$ is complete, there exists a unique morphism $h:Y\arr A$ such that $F(y,a) = \delta_A(h(y),a)$.\\\\Because of internal injectivity of $d$ we have $$\delta_A(h(d(x)),a) = \exists x':X.\  \delta_A(q(x'),a) \wedge \delta_X(x',x) = \delta_A(q(x),a)$$ thus $q$ and $h\circ d$ have the same internal graph, therefore they are equal by completeness of $A$.
\end{proof}
As a corollary of \ref{ccu}, we have that there exists a functor $U:\textbf{Map}_c(\C,P)\arr \textbf{Shv}(\C,P)$ which is the identity on objects and maps every functional relation $F$ to the unique morphism $f$ such that $\Gamma f = F$. Thus the functor $\nabla:\textbf{Map}_c(\C,P)\arr\C$ factors through the inclusion of \textbf{Shv}$(\C,P)$ in $\C$ as in the following commutative diagram
\[
\xymatrix{
\textbf{Map}_c(\C,P)\ar@{^{(}->}[r]\ar[rd]^-{\nabla}\ar[d]_-{U}&\textbf{Map}(\C,P)\\
\textbf{Shv}(\C,P)\ar@{^{(}->}[r]&\C\ar[u]_-{\Gamma}
}
\]
An \ed\ doctrine $(\C,P)$ is said to \textbf{admit sheafification}  if $P$-\textbf{Shv} is reflective.
In the next section we will introduce a class of doctrines which admit sheafification. For those doctrines it holds also that both the functor $U$ and the inclusion of \textbf{Map}$_c(\C,P)$ into \textbf{Map}$(\C,P)$ are equivalences. This fact turns out to be useful to prove in section \ref{sec7b} that every topos that comes from a tripos via the tripos to topos construction is a category of the form \textbf{Shv}$(\C,P)$ for some suitable doctrine $(\C,P)$.
\section{A sheafification theorem}\label{sec17}
\begin{deff}\label{mizzega} 
An \ed\ doctrine $(\C,P)$ is said to have \textbf{singletons} if\\\\
i) $(\C,P)$ has power objects\\\\
ii) arrows of the form $\{\delta_A\}$ have images and are internally injective\\\\iii) for every morphism $g:Y\arr \mathbb{P}(A)$ it holds that $(id_A\times g)^*\in_A$ is a functional relation from $Y$ to $A$ if and only if $g^*(\exists_{\{\delta_A\}}\top_A) = \top_Y$. 
\end{deff}
\begin{exa}
The doctrine of subobjects of an elementary topos $(\mathcal{E}, \text{sub})$ has singletons. In fact for every $A$ the arrow $\delta_A$ is represented by the diagonal $\Delta_A:A\arr A\times A$ and $\{\delta_A\}$ is the unique arrow that makes the following a pullback
\[
\xymatrix{
A\ar[d]_-{\Delta_A}\ar[rr]&&.\ar[d]^-{\in_A}\\
A\times A\ar[rr]_-{id_A\times\{\delta_A\}}&&A\times \mathbb{P}(A)
}
\]
$\{\delta_A\}$ is mono and therefore internally injective in $(\mathcal{E},\text{sub})$. The doctrine has all images, given by the epi-mono factorization of $\mathcal{E}$. Given a morphism $g:Y\arr\mathbb{P}(A)$, the condition $g^*(\exists_{\{\delta_A\}}\top_A)=\exists_{\pi}(\{\delta_A\}\times g)^*\delta_{\mathbb{P}(A)} = \top_Y$ can be written internally as $$\exists a: A.\ \forall x:A.\ (\delta_A(a,x) \sse x\in_Ag(y))$$ this proposition can be easily seen to be equivalent to the following $$\exists a: A.\ (a\in_A g(y) \wedge \forall x:A.\ x\in_Ag(y)\imply\delta_A(a,x))$$ which expresses the fact that $(id_A\times g)^*\in_A$ is a functional relation \cite{TT}.
\end{exa}
The following three lemmas are instrumental to prove in \ref{trip} that the condition of having singletons is sufficient for an \ed\ doctrine to admit sheafification. Thus in the rest of the section we will assume to work with an \ed\ doctrine $(\C,P)$ with singletons, we shall abbreviate the formula $\exists_{\{\delta_A\}}\top_A$ with $\sigma_A$ and denote by $S_A$ the image of $\{\delta_A\}$, i.e. the domain of the arrow $$\lfloor\sigma_A\rfloor:S_A\arr \mathbb{P}(A)$$
\begin{lem}\label{trip1}
For every object $A$ in $\C$, there exists a morphism $\eta_A:A\arr S_A$ such that $\eta_A$ is internally bijective.
\end{lem}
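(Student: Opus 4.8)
The plan is to read $\eta_A$ off the universal property of the image $\lfloor\sigma_A\rfloor$ and then verify the two clauses of internal bijectivity (Definition~\ref{dense}) separately, each by a short Beck-Chevalley argument.

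First I would construct $\eta_A$. Since $\lfloor\sigma_A\rfloor:S_A\arr\mathbb{P}(A)$ is a comprehension of $\sigma_A=\exists_{\{\delta_A\}}\top_A$, any morphism $f:Y\arr\mathbb{P}(A)$ with $\top_Y\le f^*\sigma_A$ factors uniquely through $\lfloor\sigma_A\rfloor$. Applying this to $f=\{\delta_A\}$: the unit of $\exists_{\{\delta_A\}}\dashv\{\delta_A\}^*$ gives $\top_A\le\{\delta_A\}^*\exists_{\{\delta_A\}}\top_A=\{\delta_A\}^*\sigma_A$, so there is a unique $\eta_A:A\arr S_A$ with $\lfloor\sigma_A\rfloor\circ\eta_A=\{\delta_A\}$. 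I would also record three routine facts about the comprehension $m:=\lfloor\sigma_A\rfloor$, all consequences of its universal property: (a) $m$ is a monomorphism; (b) $m$ is internally injective, $(m\times m)^*\delta_{\mathbb{P}(A)}=\delta_{S_A}$, by Beck-Chevalley applied to the pullback square with sides $\Delta_{S_A}$, $m$, $\Delta_{\mathbb{P}(A)}$ and $m\times m$ (a pullback precisely because $m$ is monic); (c) $m^*\exists_m\beta=\beta$ for every $\beta$ in $P(S_A)$, again by Beck-Chevalley, this time for the pullback of $m$ along itself, which by monicity is $S_A$ with both legs the identity.

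For internal injectivity of $\eta_A$ I would use clause (ii) of Definition~\ref{mizzega}: $\{\delta_A\}$ is internally injective, so $\delta_A=(\{\delta_A\}\times\{\delta_A\})^*\delta_{\mathbb{P}(A)}$. Since $\{\delta_A\}\times\{\delta_A\}=(m\times m)\circ(\eta_A\times\eta_A)$, functoriality of reindexing together with fact (b) rewrites the right-hand side as $(\eta_A\times\eta_A)^*(m\times m)^*\delta_{\mathbb{P}(A)}=(\eta_A\times\eta_A)^*\delta_{S_A}$; hence $\delta_A=(\eta_A\times\eta_A)^*\delta_{S_A}$.

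For internal surjectivity, from $\{\delta_A\}=m\circ\eta_A$ and functoriality of $\exists$ we get $\sigma_A=\exists_{\{\delta_A\}}\top_A=\exists_m(\exists_{\eta_A}\top_A)$; reindexing along $m$ and applying (c) with $\beta=\exists_{\eta_A}\top_A$ gives $m^*\sigma_A=\exists_{\eta_A}\top_A$. But $\top_{S_A}\le m^*\sigma_A$ because $\lfloor\sigma_A\rfloor$ is a comprehension of $\sigma_A$, so $\top_{S_A}\le\exists_{\eta_A}\top_A$, and the reverse inequality is automatic; thus $\top_{S_A}=\exists_{\eta_A}\top_A$, and together with the previous step $\eta_A$ is internally bijective. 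The argument is essentially formal; the only points that need a little care are the two pullback verifications in (b) and (c), where monicity of a comprehension is used, and checking that $\lfloor\sigma_A\rfloor\circ\eta_A=\{\delta_A\}$ really is the factorization delivered by the universal property, so that functoriality of $\exists$ and of $(-)^*$ may legitimately be applied to it. I do not anticipate a genuine obstacle.
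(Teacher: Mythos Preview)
Your proof is correct and follows essentially the same outline as the paper's: construct $\eta_A$ via the universal property of the comprehension $\lfloor\sigma_A\rfloor$, obtain internal injectivity from that of $\{\delta_A\}$ and of $\lfloor\sigma_A\rfloor$, and read off internal surjectivity from $\top_{S_A}=\lfloor\sigma_A\rfloor^*\sigma_A$. The only cosmetic difference is that the paper carries out the surjectivity step in the internal language (using the Remark~\ref{ginapina} description of $\exists_{\{\delta_A\}}$ together with internal injectivity of $\lfloor\sigma_A\rfloor$), whereas you do it by the Beck--Chevalley identity $m^*\exists_m=\mathrm{id}$; your explicit verification of facts (a)--(c) simply makes precise what the paper leaves implicit.
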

\begin{proof}
We trivially have that $\top_A = \{\delta_A\}^*\sigma_A$, therefore, by the universal property of comprehensions, there exists $\eta_A:A\arr S_A$ with $\lfloor\sigma_A\rfloor\circ\eta_A=\{\delta_A\} $. Internally injectivity of $\eta_A$ follows from internal injectivity of both $\{\delta_A\}$ and $\lfloor\sigma_A\rfloor$. Moreover, by definition of comprehension, we have that 
\begin{equation}\notag
\begin{split}
\top_{S_A}(s)& = \exists a: A.\ \delta_{\mathbb{P}(A)}(\{\delta_A\}(a), \lfloor\sigma_A\rfloor(s))\\
& = \exists a: A.\ \delta_{\mathbb{P}(A)}(\lfloor\sigma_A\rfloor(\eta_A(a)), \lfloor\sigma_A\rfloor(s))\\
& = \exists a: A.\ \delta_{S_A}(\eta_A(a),s)
\end{split}
\end{equation}
which proves the internal surjectivity of $\eta_A$.
\end{proof}
\begin{lem}\label{trip2}
For every $A$ in $\C$ it holds that $\delta_{S_A}(\eta_A(a),s) = a\in_A\lfloor\sigma_A\rfloor(s)$.
\end{lem}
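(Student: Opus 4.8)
The statement is an identity between two formulas of $P(A\times S_A)$, namely $\delta_{S_A}(\eta_A(a),s)$ and $a\in_A\lfloor\sigma_A\rfloor(s)$, so the plan is to prove the two inequalities separately; the left-to-right one is easy and the right-to-left one is the point. I would first isolate two facts. From Definition \ref{nonnonanni} applied to $\gamma=\delta_A$ we have $a\in_A\{\delta_A\}(a')=\delta_A(a,a')$, and since $\lfloor\sigma_A\rfloor\circ\eta_A=\{\delta_A\}$ by Lemma \ref{trip1}, reindexing along $id_A\times\eta_A$ gives $a\in_A\lfloor\sigma_A\rfloor(\eta_A(a'))=\delta_A(a,a')$; in particular, taking $a'=a$, the formula $a\in_A\lfloor\sigma_A\rfloor(\eta_A(a))$ equals $\top$. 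Secondly, since $\lfloor\sigma_A\rfloor$ is a comprehension of $\sigma_A$ we have $\lfloor\sigma_A\rfloor^*\sigma_A=\top_{S_A}$, so condition iii) of Definition \ref{mizzega} applied to $g=\lfloor\sigma_A\rfloor$ tells us that $(id_A\times\lfloor\sigma_A\rfloor)^*\in_A$ is a functional relation from $S_A$ to $A$.

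For the inequality $\delta_{S_A}(\eta_A(a),s)\le a\in_A\lfloor\sigma_A\rfloor(s)$ I would apply substitutivity of $\delta_{S_A}$ (Remark \ref{ginapina}) to the formula $\phi(a,s):=a\in_A\lfloor\sigma_A\rfloor(s)$ of $P(A\times S_A)$: this gives $\phi(a,\eta_A(a))\wedge\delta_{S_A}(\eta_A(a),s)\le\phi(a,s)$, and since $\phi(a,\eta_A(a))=\top$ by the first fact, the inequality follows at once.

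For the reverse inequality I would observe that $(s,a)\mapsto\delta_{S_A}(\eta_A(a),s)$ is itself a functional relation from $S_A$ to $A$: single-valuedness follows from symmetry and transitivity of $\delta_{S_A}$ together with internal injectivity of $\eta_A$, and totality is precisely the internal surjectivity of $\eta_A$ proved in Lemma \ref{trip1}. We thus have two functional relations from $S_A$ to $A$, the first lying below the second by the previous paragraph, so it remains to invoke the standard fact that a functional relation lying below another one coincides with it: if $F\le G$ are functional, then $G(s,a)=\exists a':A.\,(G(s,a)\wedge F(s,a'))$ by totality of $F$ and Frobenius reciprocity, while single-valuedness of $G$ gives $G(s,a)\wedge F(s,a')\le F(s,a')\wedge\delta_A(a,a')$, whence $G(s,a)\le F(s,a)$ by substitutivity of $\delta_A$. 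This yields $a\in_A\lfloor\sigma_A\rfloor(s)\le\delta_{S_A}(\eta_A(a),s)$ and, with the previous paragraph, the desired equality.

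All the computations involved are routine; the only thing requiring care is the bookkeeping of the order of arguments in the various equality predicates and the matching of reindexings, so that Remark \ref{ginapina} and Lemma \ref{trip1} are applied exactly in the forms in which they were stated — there is no conceptual obstacle. If one prefers to bypass condition iii) of Definition \ref{mizzega} and the cancellation fact, the reverse inequality can instead be obtained by a direct internal computation: pull an $a_0$ with $\delta_{S_A}(\eta_A(a_0),s)$ out of $\top_{S_A}(s)$ via internal surjectivity of $\eta_A$ and Frobenius, rewrite $a\in_A\lfloor\sigma_A\rfloor(s)$ as $\delta_A(a,a_0)=\delta_{S_A}(\eta_A(a),\eta_A(a_0))$ along this $a_0$ using substitutivity of $\delta_{S_A}$ and internal injectivity of $\eta_A$, and contract against $\delta_{S_A}(\eta_A(a_0),s)$ by transitivity, discharging the now redundant existential by the counit $\exists_\pi\pi^*\le\mathrm{id}$.
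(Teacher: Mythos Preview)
Your proof is correct and follows essentially the same route as the paper: establish the inequality $\delta_{S_A}(\eta_A(a),s)\le a\in_A\lfloor\sigma_A\rfloor(s)$, verify that both sides are functional relations from $S_A$ to $A$ (the right-hand side via condition~iii) of Definition~\ref{mizzega} with $g=\lfloor\sigma_A\rfloor$, the left-hand side via internal bijectivity of $\eta_A$), and conclude by the cancellation fact for functional relations. The only cosmetic difference is in how the first inequality is obtained: the paper passes through the adjunction $\exists_{id_A\times\eta_A}\dashv(id_A\times\eta_A)^*$ and then simplifies $\exists_{id_A\times\eta_A}\delta_A$ to $\delta_{S_A}(\eta_A(a),s)$, whereas you invoke substitutivity of $\delta_{S_A}$ directly after observing $\phi(a,\eta_A(a))=\top$; these are two phrasings of the same computation.
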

\begin{proof}
Since $\{\delta_A\} = \lfloor\sigma_A\rfloor \circ \eta_A$, we have $$\delta_A=(id_A\times\{\delta_A\})^*\in_A=(id_A\times \eta_A)^*(id_A\times \lfloor\sigma_A\rfloor)^*\in_A$$ therefore $$\exists_{id_A\times \eta_A}\delta_A \le (id_A\times \lfloor\sigma_A\rfloor)^*\in_A$$This inequality can be written internally as
$$\exists x,y:A.\ \delta_A(x,a)\wedge \delta_{S_A}(\eta_A(x),s)\wedge \delta_A(x,y)\le a\in_A\lfloor\sigma_A\rfloor(s)$$
by Frobenius reciprocity and substitutivity of $\delta_A$, the left hand side of the inequality is equal to $\delta_{S_A}(\eta_A(a),s)$, which is a functional relation from $S_A$ to $A$, as follows from the fact that $\eta_A$ is internally bijective.\\\\
Thus to prove the lemma it suffices to prove that also the right hand side of the inequality is a functional relation from $S_A$ to $A$. This is true since, by definition of comprehension, we have $\top_{S_A}(s) = \sigma_A(\lfloor\sigma_A\rfloor(s))$ and recalling that $\sigma_A$ is a shorthand for $\exists_{\{\delta_A\}}\top_A$, we have that $$\top_{S_A}(s)=\exists_{\{\delta_A\}}\top_A (\lfloor\sigma_A\rfloor(s)) = \exists a:A.\ \delta_{\mathbb{P}(A)}(\{\delta_A\}(a),\lfloor\sigma_A\rfloor(s))$$ Now apply point iii) of definition \ref{mizzega} on $a\in_A\lfloor\sigma_A\rfloor(s): S_A\arr \mathbb{P}(A)$.\end{proof}
\begin{lem}\label{trip3}
For every functional relation $F$ from $Y$ to $A$ there exists a unique morphism $h:Y\arr S_A$ with $$F(y,a) = \delta_{S_A}(h(y), \eta_A(a))$$
\end{lem}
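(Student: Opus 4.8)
The plan is to reduce the statement to the power-object structure together with condition iii) of Definition \ref{mizzega}, and then to read off the claimed identity from Lemma \ref{trip2}. Given a functional relation $F$ in $P(Y\times A)$, the first step is to use the power objects of $(\C,P)$ to produce the classifying morphism $g:Y\arr\mathbb{P}(A)$: it is the unique arrow such that $(id_A\times g)^*\in_A$ is $F$ up to the evident twist $A\times Y\arr Y\times A$, so that internally $a\in_A g(y) = F(y,a)$. Since $F$ is by hypothesis a functional relation from $Y$ to $A$, so is $(id_A\times g)^*\in_A$, and hence point iii) of Definition \ref{mizzega} gives $g^*\sigma_A = \top_Y$.

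The second step is to factor $g$ through the comprehension of $\sigma_A$. From $\top_Y\le g^*\sigma_A$ and the universal property of the comprehension $\lfloor\sigma_A\rfloor:S_A\arr\mathbb{P}(A)$ we obtain a unique morphism $h:Y\arr S_A$ with $\lfloor\sigma_A\rfloor\circ h = g$. The required identity is then immediate from Lemma \ref{trip2} and the symmetry of the equality predicate:
\[
\delta_{S_A}(h(y),\eta_A(a)) = \delta_{S_A}(\eta_A(a),h(y)) = a\in_A\lfloor\sigma_A\rfloor(h(y)) = a\in_A g(y) = F(y,a).
\]

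For uniqueness, suppose $h':Y\arr S_A$ also satisfies $F(y,a) = \delta_{S_A}(h'(y),\eta_A(a))$. Applying Lemma \ref{trip2} once more, this means $a\in_A\lfloor\sigma_A\rfloor(h'(y)) = F(y,a) = a\in_A g(y)$, i.e. $(id_A\times(\lfloor\sigma_A\rfloor\circ h'))^*\in_A = (id_A\times g)^*\in_A$. By Remark \ref{walter} this forces $\lfloor\sigma_A\rfloor\circ h' = g = \lfloor\sigma_A\rfloor\circ h$, and then the uniqueness part of the comprehension property yields $h' = h$.

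The one point that needs care is the bookkeeping around condition iii): one must check that the formula $(id_A\times g)^*\in_A$ extracted from the classifying map of $F$ is literally (up to the harmless swap of the two coordinates) the functional relation $F$, so that iii) actually applies and delivers $g^*\sigma_A=\top_Y$ — this is precisely the hypothesis $\top_Y\le g^*\sigma_A$ that lets $g$ be pushed through the comprehension $\lfloor\sigma_A\rfloor:S_A\arr\mathbb{P}(A)$. Everything else is a routine invocation of the universal properties of power objects and of comprehensions, plus Lemma \ref{trip2}.
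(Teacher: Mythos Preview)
Your proof is correct and follows essentially the same route as the paper: your classifying arrow $g$ is precisely the paper's $\{F^{op}\}$, and the subsequent use of condition iii), the comprehension factorization, Lemma \ref{trip2}, and Remark \ref{walter} for uniqueness all match. The only cosmetic difference is that the paper phrases the final step as ``comprehension morphisms are mono'' where you invoke the uniqueness clause of the comprehension property, but these are the same observation.
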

\begin{proof}
Suppose $F$ in $P(Y\times A)$ is functional from $Y$ to $A$. Then $F^{op} = <\pi_2,\pi_1>^*F$ determines a morphism $\{F^{op}\}:Y\arr \mathbb{P}(A)$ sucht that
$$\exists a:A.\ a\in_A \{F^{op}\}(y) = \top_Y(y)$$$$a\in_A \{F^{op}\}(y) \wedge a'\in_A \{F^{op}\}(y) \le \delta_A(a,a')$$then, by point iii) of definition \ref{mizzega}, $\{F^{op}\}^*\sigma_A=\top_Y$ and therefore, by the universal property of comprehension, there exists $h:Y\arr S_A$ with $\lfloor\sigma_A\rfloor \circ h = \{F^{op}\}$. Thus, by lemma \ref{trip2} $$F(y,a)=a\in_A \{F^{op}\}(y) = a\in_A \lfloor\sigma_A\rfloor(h (y)) = \delta_{S_A}(\eta_A(a),h(y))$$To prove uniqueness of $h$, note that for every $g$ such that $\delta_{S_A}(\eta_A(a),g(y)) = F(y,a)$, it holds that  $$a\in_A \lfloor\sigma_A\rfloor(g (y))= \delta_{S_A}(\eta_A(a),g(y))=\delta_{S_A}(\eta_A(a),h(y))=a\in_A \lfloor\sigma_A\rfloor(h (y))$$Recall that the formula $a\in_A \lfloor\sigma_A\rfloor(g (y))$ corresponds to $(id_A\times\lfloor\sigma_A\rfloor\circ g)^*\in_A$, then from the previous equality we have $$(id_A\times\lfloor\sigma_A\rfloor\circ g)^*\in_A=(id_A\times\lfloor\sigma_A\rfloor\circ h)^*\in_A$$ and therefore $$\lfloor\sigma_A\rfloor\circ g=\{(id_A\times\lfloor\sigma_A\rfloor\circ g)^*\in_A\} = \{(id_A\times\lfloor\sigma_A\rfloor \circ h)^*\in_A\}= \lfloor\sigma_A\rfloor \circ h$$since comprehension morphisms are mono, $g=h$.
\end{proof}
We now want to prove that for an \ed\ doctrine $(\C,P)$ with singletons, the inclusion of the category \textbf{Shv}$(\C,P)$ in $\C$ has a left adjoint, whose unite is the family of morphisms of the form $\eta_A:A\arr S_A$. In order to prove this, knowing that, by lemma \ref{trip1}, every $\eta_A$ is internally bijective, it is enough to show that for every $A$ in $\C$ and every span of the form 
\[
\xymatrix{
Y&X\ar[l]_-{d}\ar[r]^-{q}&S_A
}
\]
where $d$ is internally bijective, there exists $h:Y\arr S_A$ with $h\circ d = q$. This infact shows that $S_A$ is a $P$-sheaf and moreover replacing $d$ with $\eta_X$ we have that every $q:X\arr S_A$ has a unique extention to $S_X$, proving that \textbf{Shv}$(\C,P)$ is reflective.
\begin{prop}\label{trip}
Every \ed\ doctrine with singletons admits sheafification.
\end{prop}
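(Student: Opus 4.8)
The plan is to deduce the statement from Lemmas~\ref{trip1}--\ref{trip3}. It suffices to prove that for every object $A$ the object $S_A$ is a $P$-sheaf. Granting this, Lemma~\ref{trip1} says that the internally bijective morphism $\eta_A\colon A\arr S_A$ has a $P$-sheaf as codomain, so for every $P$-sheaf $B$ and every $f\colon A\arr B$ orthogonality of $B$ to $\eta_A$ yields a unique $\bar f\colon S_A\arr B$ with $\bar f\circ\eta_A=f$; thus $A\mapsto S_A$ is left adjoint to the inclusion $\textbf{Shv}(\C,P)\hookrightarrow\C$, with unit $\eta$. (Equivalently, as sketched before the statement, once each $S_A$ is a $P$-sheaf one extends every $q\colon X\arr S_A$ uniquely along $\eta_X$, exhibiting $S_X$ as the reflection of $X$.)

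So fix $A$ and a span $Y\xleftarrow{\,d\,}X\xrightarrow{\,q\,}S_A$ with $d$ internally bijective; I must produce a unique $h\colon Y\arr S_A$ with $h\circ d=q$. For existence I would apply Lemma~\ref{trip3}, which needs a functional relation from $Y$ to $A$; the candidate is
$$F(y,a)\ =\ \exists x\colon X.\ \delta_{S_A}(q(x),\eta_A(a))\wedge\delta_Y(d(x),y).$$
That $F$ is functional can be seen conceptually: since $d$ is internally bijective, $\Gamma d^{op}$ is a functional relation from $Y$ to $X$; the formula $a\in_A\lfloor\sigma_A\rfloor(q(x))$ is functional from $X$ to $A$ by point iii) of Definition~\ref{mizzega}, because $(\lfloor\sigma_A\rfloor\circ q)^*\sigma_A=q^*\lfloor\sigma_A\rfloor^*\sigma_A=q^*\top_{S_A}=\top_X$; and by Lemma~\ref{trip2} (and symmetry of $\delta$) this formula equals $\delta_{S_A}(q(x),\eta_A(a))$, so $F$ is the relational composite of these two functional relations. (Alternatively, one checks the two defining inequalities of a functional relation directly, using Frobenius reciprocity, substitutivity of $\delta$, and internal injectivity and surjectivity of $d$ and of $\eta_A$.) Lemma~\ref{trip3} then gives a unique $h\colon Y\arr S_A$ with $F(y,a)=\delta_{S_A}(h(y),\eta_A(a))$.

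The heart of the argument is verifying $h\circ d=q$. One computes $\delta_{S_A}(h(d(x)),\eta_A(a))=F(d(x),a)$, and internal injectivity of $d$ together with substitutivity of $\delta$ collapses the existential to $\delta_{S_A}(q(x),\eta_A(a))$. By Lemma~\ref{trip2} this equality reads $a\in_A\lfloor\sigma_A\rfloor(h(d(x)))=a\in_A\lfloor\sigma_A\rfloor(q(x))$, i.e. $(id_A\times(\lfloor\sigma_A\rfloor\circ h\circ d))^*\in_A=(id_A\times(\lfloor\sigma_A\rfloor\circ q))^*\in_A$; Remark~\ref{walter} then forces $\lfloor\sigma_A\rfloor\circ h\circ d=\lfloor\sigma_A\rfloor\circ q$, and since comprehension morphisms are monic, $h\circ d=q$. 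For uniqueness: if $h'\colon Y\arr S_A$ also satisfies $h'\circ d=q$, substitute $q=h'\circ d$ into the defining formula of $F$ and use internal surjectivity of $d$ (to rewrite $\top_Y(y)$ as $\exists x\colon X.\ \delta_Y(d(x),y)$), Frobenius reciprocity, and substitutivity of $\delta$ to obtain $F(y,a)=\delta_{S_A}(h'(y),\eta_A(a))$; then $h'=h$ by the uniqueness clause of Lemma~\ref{trip3}. This proves $S_A$ is a $P$-sheaf, and the first paragraph finishes the argument.

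I expect the step $h\circ d=q$ to be the main obstacle. It is the only point at which one must pass from an equality of internal membership predicates back to an equality of honest morphisms of $\C$, and this is possible precisely because $S_A$ is constructed as the image of $\{\delta_A\}$: Lemma~\ref{trip2} identifies $\delta_{S_A}(\eta_A(a),s)$ with $a\in_A\lfloor\sigma_A\rfloor(s)$, the morphism $\lfloor\sigma_A\rfloor$ is monic, and $\mathbb{P}(A)$ classifies formulas. By comparison, the functionality of $F$ and the uniqueness of $h$ are routine manipulations in the regular fragment.
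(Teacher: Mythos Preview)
Your proof is correct and follows the same strategy as the paper: build the functional relation $F$ (the paper calls it $\xi$) from $d$, $q$, and $\eta_A$, invoke Lemma~\ref{trip3} to obtain $h$, and then check $h\circ d=q$. The only difference is in that last step: where you unwind through Lemma~\ref{trip2}, Remark~\ref{walter}, and monicity of $\lfloor\sigma_A\rfloor$, the paper simply observes that $\delta_{S_A}(h(d(x)),\eta_A(a))=F(d(x),a)=\delta_{S_A}(q(x),\eta_A(a))$ and then cites the \emph{uniqueness} clause of Lemma~\ref{trip3} (applied with $X$ in place of $Y$) to conclude $h\circ d=q$ directly---your longer route is in effect reproving that uniqueness clause inline.
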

\begin{proof} Consider the span above and the following diagram
\[
\xymatrix{
X\ar[d]_-{d}\ar[r]^-{q}&S_A \ar[r]^-{\lfloor\sigma_A\rfloor}& \mathbb{P}(A)\\
Y\ar[rru]_-{\{\xi^{op}\}}&&
}
\] 
where $\xi(y,a) = \exists x:X.\ \delta_Y(y,d(x)) \wedge \delta_{S_ A}(q(x),\eta_A(a))$ and $\xi^{op} = <\pi_2,\pi_1>^*\xi$.\\\\ We need prove that there exists a unique $h:Y\arr S_A$ with $h\circ d = q$.\\\\ To prove the existence of $h$, note that, by Frobenius Reciprocity and the fact that $\eta_A$ and $d$ are internally surjective, $\top_Y(y) = \exists a:A.\ \xi(y,a)$. Again by Frobenius Reciprocity and by internal injectivity of $\eta_A$ and $d$ we have that $\xi(y,a) \wedge \xi(y,a) \le \delta_A(a,a')$. Therefore $\xi$ is functional from $Y$ to $A$ and by lemma \ref{trip3} there exists a unique $h:Y\arr S_A$, such that $\xi(y,a) = \delta_{S_A}(h(y),\eta_A(a))$.\\

Once we have the morphism $h: Y\arr S_A$, we need show that it makes the diagram commutes. Note, by using internal injectivity of $d$, that $\xi(d(x),a) = \delta_{S_A}(f(x),\eta_A(a))$ which means that $\delta_{S_A}(h(d(x)),\eta_A(a)) = \delta_{S_A}(f(x),\eta_A(a))$ and thus, by lemma \ref{trip3}, $h\circ d = f$.
\end{proof} 
Thus the diagram at the end of section \ref{sec7} becomes
\[
\xymatrix{
\textbf{Map}_c(\C,P)\ar@{^{(}->}[rr]\ar@/^/[rrd]^-{L}\ar[d]_-{U}&&\textbf{Map}(\C,P)\\
\textbf{Shv}(\C,P)\ar@{^{(}->}@/_/[rr]\ar@{}[rr]|{\ \ \ \bot}&&\C\ar[u]_-{\Gamma}\ar@/_/[ll]
}
\]
Moreover, the fact that for an \ed\ doctrine $(\C,P)$ with singletons the category \textbf{Shv}$(\C,P)$ is reflective allows to characterize $P$-sheaves as complete objects, as in the following corollary.
\begin{cor}\label{cca}
Let $(\C,P)$ be an \ed\ doctrine with singletons, then the functor $U$ is an equivalence.
\end{cor}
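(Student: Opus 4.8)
The plan is to prove that $U$ is fully faithful and essentially surjective, and the bulk of the work goes into the second property.

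For full faithfulness I would first observe that every object of $\textbf{Map}_c(\C,P)$ is complete and hence, by Proposition \ref{ccu}, a $P$-sheaf; so for complete objects $Y$ and $A$ the hom-set of $\textbf{Shv}(\C,P)$ from $Y$ to $A$ is simply $\C(Y,A)$ (both categories in sight are full subcategories). By the definition of completeness of $A$, the assignment $f\mapsto\Gamma f$ is a bijection between $\C(Y,A)$ and the set of functional relations from $Y$ to $A$, and $U$ is precisely its inverse on hom-sets; since $U$ is already known to be a functor (because $\Gamma$ is), it is therefore bijective on hom-sets, i.e. fully faithful.

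For essential surjectivity I would fix a $P$-sheaf $A$ and show that $A$ is isomorphic in $\C$ to the complete object $S_A$. The substantive point is that $S_A$ is complete: given a functional relation $G$ from an object $Y$ to $S_A$, I would set $F(y,a):=G(y,\eta_A(a))$ and check, using that $\eta_A$ is internally bijective (Lemma \ref{trip1}) together with substitutivity of $\delta$, that $F$ is a functional relation from $Y$ to $A$. Lemma \ref{trip3} then produces a unique $h:Y\arr S_A$ with $G(y,\eta_A(a))=\delta_{S_A}(h(y),\eta_A(a))$, and internal surjectivity of $\eta_A$ upgrades this to $\delta_{S_A}(h(y),s)=G(y,s)$ for all $s$, that is, $\Gamma h=G$; uniqueness of such an $h$ follows again from Lemma \ref{trip3}. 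Hence $S_A$ lies in $\textbf{Map}_c(\C,P)$ and $U(S_A)=S_A$. It then remains to see that $\eta_A:A\arr S_A$ is an isomorphism: it is internally bijective, and since $A$ is a $P$-sheaf there is $r:S_A\arr A$ with $r\circ\eta_A=id_A$; since $S_A$ is a $P$-sheaf too (Proposition \ref{ccu}) and $(\eta_A\circ r)\circ\eta_A=\eta_A=id_{S_A}\circ\eta_A$, orthogonality of $S_A$ to $\eta_A$ forces $\eta_A\circ r=id_{S_A}$. Thus $A\iso S_A=U(S_A)$, so $U$ is essentially surjective.

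Combining the two halves gives that $U$ is an equivalence (and, since $U$ is the identity on objects and one sees from the above that conversely every $P$-sheaf is complete, one even gets an isomorphism of categories, though the equivalence is all that is claimed). The main obstacle I anticipate is the verification that $S_A$ is complete: everything else is formal bookkeeping with the definitions, but that step needs the short internal-logic argument — relying on internal surjectivity of $\eta_A$ and substitutivity of $\delta$ — that propagates the equality of internal graphs from arguments of the special form $\eta_A(a)$ to arbitrary arguments. I do not expect this to be difficult once Lemma \ref{trip3} is available, but it is the one place where a genuine calculation is required.
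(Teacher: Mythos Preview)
Your argument is correct, but it takes a somewhat different route from the paper's. The paper dispenses with full faithfulness as ``trivial'' and then, for essential surjectivity, shows directly that every $P$-sheaf $A$ is complete: given a functional $F$ from $Y$ to $A$, Lemma~\ref{trip3} yields $h:Y\to S_A$ with $F=(h\times\eta_A)^*\delta_{S_A}$; since \textbf{Shv}$(\C,P)$ is reflective by Proposition~\ref{trip}, the unit $\eta_A$ is an isomorphism and $\eta_A^{-1}\circ h$ is the morphism whose graph is $F$. Your approach instead first establishes that $S_A$ itself is complete via an internal-logic argument, and only afterwards derives that $\eta_A$ is invertible from the orthogonality of the two sheaves $A$ and $S_A$ to the internally bijective $\eta_A$. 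The net effect is the same, but the trade-off is this: the paper's proof is shorter because it cashes in on Proposition~\ref{trip} to get $\eta_A^{-1}$ for free, whereas your proof is more self-contained---it never invokes the reflectivity result and instead reproves (in a slightly different guise) the completeness of $S_A$ that underlies it. Both are legitimate; yours does a little extra work but makes the corollary logically independent of Proposition~\ref{trip}.
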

\begin{proof}$U$ is trivially full and faithful. To prove that it is essentially surjective, take $A$ in \textbf{Shv}$(\C,P)$ and a functional relation $F$ from $Y$ to $A$. Then $A$ is a complete object if there exists a morphism in $\C$ from $Y$ to $A$ whose internal graph is $F$. By lemma \ref{trip3} there exists a morphism $h:Y\arr S_A$ in $\C$ such that $F = (h\times\eta_A)^*\delta_{S_A}$. Since \textbf{Shv}$(\C,P)$ is reflective, $\eta_A$ is an isomorphism, then $\eta_A^{-1}\circ h:Y\arr A$ is the desired morphism.\end{proof}
\begin{cor}\label{ccaccu}
Let $(\C,P)$ be an \ed\ doctrine with singletons, then the inclusion of \textbf{Map}$_c(\C,P)$ into \textbf{Map}$(\C,P)$ is an equivalence.
\end{cor}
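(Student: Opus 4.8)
The inclusion $\textbf{Map}_c(\C,P)\hookrightarrow\textbf{Map}(\C,P)$ is, by construction, the inclusion of a full subcategory, hence automatically full and faithful, so the only thing left to prove is essential surjectivity on objects. Since the objects of $\textbf{Map}(\C,P)$ are precisely those of $\C$, the plan is: for an arbitrary $A$ in $\C$, exhibit a complete object together with an isomorphism to $A$ \emph{inside} $\textbf{Map}(\C,P)$. The natural candidate is $S_A$, with the isomorphism given by the internal graph $\Gamma\eta_A$.

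Two points then have to be established. First, that $S_A$ is a complete object: by the proof of Proposition \ref{trip} it is a $P$-sheaf, and the proof of Corollary \ref{cca} shows that every $P$-sheaf $B$ is complete — for any functional relation into $B$ it produces, via Lemma \ref{trip3} and the invertibility of $\eta_B$ (which holds because $B$ is a $P$-sheaf and $\textbf{Shv}(\C,P)$ is reflective), a morphism of $\C$ with that internal graph, while uniqueness follows from the uniqueness clause of Lemma \ref{trip3}; hence $S_A$ lies in $\textbf{Map}_c(\C,P)$. Second, that $\Gamma\eta_A=(\eta_A\times id_{S_A})^*\delta_{S_A}$ is an isomorphism of $\textbf{Map}(\C,P)$; I would take its candidate inverse to be $(\Gamma\eta_A)^{op}=<\pi_2,\pi_1>^*\Gamma\eta_A$ and verify the two identities $(\Gamma\eta_A)^{op}\circ\Gamma\eta_A=\delta_A$ and $\Gamma\eta_A\circ(\Gamma\eta_A)^{op}=\delta_{S_A}$, where $\circ$ is composition of relations in $\textbf{Map}(\C,P)$. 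Unfolding the composition turns each side into an existential formula; in each identity one inclusion is immediate from substitutivity of $\delta$, and the reverse one follows, using Frobenius reciprocity, from internal injectivity of $\eta_A$ (for the first identity) and internal surjectivity of $\eta_A$ (for the second), both supplied by Lemma \ref{trip1}. This is the same kind of internal computation already performed in the proof of Proposition \ref{ccu}, and it in fact shows more generally that $\Gamma$ carries every internally bijective morphism of $\C$ to an isomorphism of $\textbf{Map}(\C,P)$.

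Combining the two, every object $A$ of $\textbf{Map}(\C,P)$ is isomorphic there to the complete object $S_A$, so the inclusion is essentially surjective and therefore an equivalence. I do not foresee a genuine obstacle: the only delicate points are to quote ``$S_A$ is complete'' from Corollary \ref{cca} and the reflectivity of $\textbf{Shv}(\C,P)$ from Proposition \ref{trip} rather than reproving them, and to remember to verify \emph{both} inclusions in each of the two identities for the inverse, since only the easy direction is purely formal.
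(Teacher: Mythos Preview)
Your proposal is correct and follows essentially the same route as the paper: reduce to essential surjectivity, use Corollary~\ref{cca} (together with Proposition~\ref{trip}) to see that $S_A$ is complete, and use internal bijectivity of $\eta_A$ from Lemma~\ref{trip1} to conclude that $\Gamma\eta_A$ is an isomorphism in $\textbf{Map}(\C,P)$ with inverse $(\Gamma\eta_A)^{op}$. The paper is simply terser, declaring the last step ``straightforward'' where you spell out the two composite identities.
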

\begin{proof}Since the inclusion in full, we only have to show that it is also essentially surjective. Suppose $A$ is in \textbf{Map}$(\C,P)$, then by \ref{cca} we have that $S_A$ is in \textbf{Map}$_c(\C,P)$. Since $\eta_A:A\arr S_A$ is internally bijective it straightforward to show that the functional relation $\Gamma\eta_A$ from $A$ to $S_A$ is an isomorphism in \textbf{Map}$(\C,P)$.
\end{proof}
Thus the diagram above reduces to
\[
\xymatrix{
\textbf{Map}(\C,P)\simeq\textbf{Map}_c(\C,P)\simeq\textbf{Shv}(\C,P)\ar@/_/[d]\ar@{}[d]|{ \vdash}\\
\C\ar@/_/[u]
}
\]

\section{Applications}\label{sec7b}
We discuss in this section some relevant examples.\\\\
Let $\mathcal{E}$ be an elementary topos and $j$ a Lawvere-Thierney topology. Denote by $\mathcal{E}_j$ the sub-topos of $j$-sheaves, i.e. the full subcategory of $\mathcal{E}$ on those objects which are orthogonal to $j$-dense monomoprhisms \cite{TT}. Denote also by $(\mathcal{E}, \text{cl}_j)$ the doctrine of $j$-closed subobjects. The doctrine $(\mathcal{E},\text{cl}_j)$ is an \ed\ doctrine with singletons \cite{Jacobs}, therefore by \ref{trip} it admits sheafification. We show that $\mathcal{E}_j$ is equivalent to \textbf{Shv}$(\mathcal{E},\text{cl}_j)$.\\\\The class of internally bijective morphisms in $(\mathcal{E},\text{cl}_j)$ is the class of $j$-bidense arrows of $\mathcal{E}$. Thus the category \textbf{Shv}$(\mathcal{E},\text{cl}_j)$ is the full sub-category of $\mathcal{E}$ on those objects which are orthogonal to all $j$-bidense arrows. Since every $j$-dense mono is in particular $j$-bidense, \textbf{Shv}$(\mathcal{E},\text{cl}_j)$ is a full subcategory of $\mathcal{E}_j$. To prove that the inclusion is also essentially surjective, take $A$ in $\mathcal{E}_j$ and denote by $L$ the associated sheaf functor. $L$ is known to be left adjoint to the inclusion of $\mathcal{E}_j$ in $\mathcal{E}$ and denote by $l$ the unite of the adjunction. Take any morphism $q:X\arr A$ and any $j$-bidense morphism $d:X\arr Y$ then consider the following commutative diagram
\[
\xymatrix{
Y\ar[d]_-{l_Y}&X\ar[l]_-{d}\ar[d]_-{l_X}\ar[r]^-{q}&A\ar[d]_-{l_A}\\
LY&LX\ar[l]^-{Ld}\ar[r]_-{Lq}&LA
}
\]
Since $A$ is a $j$-sheaf, $l_A$ is iso. Also $Ld$ is iso since $d$ is $j$-bidense \cite{TT}, thus in particular is a $j$-dense mono. So there exists $h:LY\arr A$ with $h\circ Ld = l_A^{-1}\circ Lq$. Then we have a morphism $h \circ l_Y: Y\arr A$ making the diagram commute.\\To show its uniqueness, suppose two morphisms $g$ and $f$ are such that $g\circ d = q = f\circ d$, then, since $d$ is bidense, it is $j$-true that their graphs are equal, by the same argument as \ref{ccu}, and therefore it is $j$-true that $\top_Y = \delta_A(g(y),f(y))$. Since $A$ is a $j$-sheaf and therefore a separated object, its diagonal is closed and then $g = f$ \cite{TT}.\\\\
With the next class of example we want to show that given a tripos $(\C,P)$, the topos $\C[P]$, i.e. the topos obtained from $(\C,P)$ by the tripos to topos construction \cite{tripinret}, is a category of the form \textbf{Shv}$(\overline{\C}, \overline{P})$ for an appropriate doctrine $(\overline{\C}, \overline{P})$ built out of $(\C,P)$. For the definition of tripos, the reader is referred to \cite{tripinret}.\\\\
Suppose $(\C,P)$ is a tripos and consider the pair $(\overline{\C}, \overline{P})$ obtained from $(\C,P)$ by freely adding comprehensions, extensional equality and quotients: details can be found in  \cite{RM2}, nevertheless we give an explicit description of $(\overline{\C}, \overline{P})$.\\\\
Objects of $\overline{\C}$ are pairs $(A,\rho)$ where $A$ is an object of $\C$ and $\rho$ is a partial equivalence relation over $A$, i.e. an formula of $P(A\times A)$ such that $\rho(x,y) = \rho(y,x)$ and $\rho(x,y)\wedge\rho(y,z)\le\rho(x,z)$.\\\\
A morphism $[f]:(A,\rho)\arr(B,\sigma)$ is an equivalence class of morphisms $f:A\arr B$ of $\C$ such that $\rho\le(f\times f)^*\sigma$, with respect to the following equivalence relation $$f\sim g\ \ \ \ \ \ \text{if and only if}\ \ \ \ \ \ \top_A\le<f,g>^*\sigma$$
For an object $(A,\rho)$ in $\overline{\C}$ we have that $$\overline{P}(A,\rho) = \{\phi\ \epsilon\ P(A)\mid\phi(x)\le\rho(x,x)\ \ \text{and}\ \ \phi(x)\wedge\rho(x,y)\le\phi(y)\}$$
For a morphism $[f]:(A,\rho)\arr(B,\sigma)$ we have that $\overline{P}[f]$ is give by the assigment $$\phi\mapsto f^*\phi\wedge\Delta_A^*\rho$$ for $\phi$ in $\overline{P}(A,\rho)$. The doctrine $(\overline{\C}, \overline{P})$ is a tripos with power objects \cite{FP2}.\\\\
It is straightforward to see that the topos $\C[P]$ obtained from $(\C,P)$ via the tripos to topos construction is \textbf{Map}$(\overline{\C},\overline{P})$, then by \ref{ccaccu} we have $\C[P]\simeq\textbf{Shv}(\overline{\C},\overline{P})$.

\end{document}